\newcommand{\st}{\,|\,}
\newcommand{\real}{\mathbb{R}}
\newcommand{\nat}{\mathbb{N}}
\newcommand{\integer}{\mathbb{Z}}
\newtheorem{theorem}{Theorem}[section]
\newtheorem{lemma}[theorem]{Lemma}
\newtheorem{corollary}[theorem]{Corollary}
\newtheorem{remark}[theorem]{Remark}
\newtheorem{example}[theorem]{Example}
\newtheorem{proposition}[theorem]{Proposition}
\begin{document}

\title[YANG INDICES OF GRASSMANNIANS]{The Yang Indices of Grassmannians}

\author[JAMES DIBBLE]{JAMES DIBBLE}
\address{Department of Mathematics and Statistics, University of Southern Maine, Portland, ME 04101}
\email{james.dibble@maine.edu}

\subjclass[2020]{Primary 55R40, 55M20, and 14M15; Secondary 57T15}

\date{}

\begin{abstract}
An elementary combinatorial technique for computing lower bounds for the Yang indices of real Stiefel manifolds and oriented real Grassmannians is described. As a demonstration, it shown that the Yang index of $St(n,k)$, and consequently $G(n,k)$, is at least $n - k$. For odd $n$, the bound for $G(n,2)$ can be improved to $n-1$. These are combined with basic properties of the Yang index and Conner--Floyd index and coindex to compute the possible Yang indices of $St(n,k)$ and $G(n,k)$ for small $n$.
\end{abstract}

\maketitle

\section{Introduction}

Much about the topology of unoriented real Grassmannians is, by now, reasonably well understood. Their $\integer_2$-cohomology groups were computed by Borel \cite{Borel1953}, and, in many cases, their cup lengths are known (e.g., \cite{Bernstein1976}, \cite{Hiller1980}, \cite{Stong1982}). It seems that their integral cohomology groups are also known, although it is difficult to find a complete description in the literature. Oriented real Grassmannians are much less well understood, except in special cases, where their $\integer_2$-cohomology groups (e.g., \cite{KorbasRusin2016b}, \cite{Rusin2019}, \cite{BasuChakraborty2020}, \cite{MatszangoszWendt2024}, \cite{ColovicJovanovicPrvulovic2025}) and cup lengths (e.g., \cite{Korbas2015}, \cite{KorbasRusin2016a}, \cite{Rusin2018}) have been studied.

This paper introduces an elementary combinatorial technique for computing lower bounds for the Yang indices of real Stiefel manifolds and oriented real Grassmannians. The Yang index is a topological invariant introduced in \cite{Yang1954} to generalize the classical theorem of Borsuk--Ulam \cite{Borsuk1933} to more general spaces with free $\integer_2$-actions. In the case of the Stiefel manifold $St(n,k)$, the action considered here, $S$, negates the final entry of each list; for the Grassmannian $G(n,k)$, $T$ reverses the orientation of each hyperplane. As a demonstration of the technique, it is shown that the Yang index of $St(n,k)$, and consequently that of $G(n,k)$, is always at least $n - k$.

\begin{theorem}\label{main theorem}
	If $k$ and $n$ are integers satisfying $1 \leq k \leq n$, then
	\[
		\nu(G(n,k)) \geq \nu(St(n,k)) \geq n - k\textrm{.}
	\]
\end{theorem}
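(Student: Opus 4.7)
The plan is to derive both inequalities from the monotonicity of the Yang index under equivariant maps: if $f\colon (X,S) \to (Y,T)$ is a $\integer_2$-equivariant map between free $\integer_2$-spaces, then $\nu(X,S) \leq \nu(Y,T)$. This is a standard consequence of the fact that the induced map on orbit spaces pulls the classifying class of the Borel fibration back to the classifying class, preserving cup products.

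For $\nu(St(n,k),S) \leq \nu(G(n,k),T)$, I would use the natural projection $\pi\colon St(n,k) \to G(n,k)$ that sends an orthonormal $k$-frame to the oriented plane it spans (with orientation induced by the frame). Under the standard actions---$T$ reversing orientation of the plane and $S$ flipping the sign of a designated frame vector---$\pi$ intertwines $S$ and $T$, so monotonicity yields the inequality.

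For the main lower bound $\nu(St(n,k),S) \geq n-k$, I would construct an equivariant map $\varphi\colon S^{n-k} \to St(n,k)$ from the antipodal sphere. Fix standard basis vectors $e_1,\dots,e_{k-1} \in \real^n$, realize $S^{n-k}$ as the unit sphere in $\operatorname{span}(e_1,\dots,e_{k-1})^{\perp}$, and set
\[
	\varphi(x) = (e_1,\dots,e_{k-1},x).
\]
This is an orthonormal $k$-frame for every $x \in S^{n-k}$, and the antipodal involution on $S^{n-k}$ corresponds under $\varphi$ to negation of the last frame vector. Since $\nu(S^{n-k},-\mathrm{id}) = n-k$ classically, monotonicity gives $\nu(St(n,k),S) \geq n-k$.

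The main subtlety is matching conventions: the argument presumes that the involution $S$ used by the paper negates a single designated frame vector. If instead $S$ is taken to be the diagonal antipodal action $(v_1,\dots,v_k) \mapsto (-v_1,\dots,-v_k)$, then $\varphi$ must be modified, or replaced by a cohomological argument exhibiting a nontrivial $(n-k)$-fold power of the classifying class in $H^{\ast}(St(n,k)/S;\integer_2)$. The latter route would fit the paper's promise of an \emph{elementary combinatorial technique}, presumably tracking which cup products of Schubert-type classes survive; my direct sphere-embedding route, by contrast, delivers both inequalities in essentially one line once the convention for $S$ is fixed, so I expect the entire difficulty to be bookkeeping of the action rather than of cohomology.
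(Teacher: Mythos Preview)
Your proof is correct, and the paper uses precisely your convention for $S$ (it defines $S(v_1,\dots,v_k)=(v_1,\dots,v_{k-1},-v_k)$), so the bookkeeping worry in your last paragraph does not arise. The first inequality is handled in the paper exactly as you do, via the equivariant projection $St(n,k)\to G(n,k)$ and monotonicity of the Yang index.

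For the second inequality the two arguments are the same idea in different dress. The paper builds an abstract simplicial complex $\mathscr{T}(n,k)$ of ``valid matrices'' with an equivariant inclusion into $St(n,k)$, and then exhibits the explicit $\tau$-invariant $(n-k)$-cycle
\[
\left[\begin{array}{ccc} 1 & \cdots & 1 \\ \vdots & & \vdots \\ k-1 & \cdots & k-1 \\ \pm k & \cdots & \pm n \end{array}\right]
\]
with vanishing boundary and nonzero index. Decoded, the columns here are the frames $(e_1,\dots,e_{k-1},\pm e_j)$ for $k\le j\le n$, and the $2^{n-k+1}$ simplices summed over are exactly the top faces of the cross-polytope triangulation of $S^{n-k}$, pushed forward by your map $\varphi$. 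So the paper is computing by hand the image of the fundamental $\integer_2$-cycle of $S^{n-k}$ under $\varphi_*$; your monotonicity argument packages the same content in one line. What the paper's more elaborate combinatorial framework buys is not this theorem but the later one: for $G(n,2)$ with $n$ odd it produces cycles that are \emph{not} supported on any equivariantly embedded sphere, pushing the bound up to $n-1$.
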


\noindent The theorem, which follows more immediately from basic results about Conner--Floyd index and coindex (cf. \cite{ConnerFloyd1962} or \cite{DaiLam1984}), is proved by explicitly constructing chains that represent $\integer_2$-invariant homology classes of high enough Yang index in an abstract simplicial complex whose faces are certain valid matrices with entries in $\{ \pm 1,\ldots,\pm n \}$. These matrices correspond to singular simplices into $St(n,k)$ of a relatively simple form. It implies a generalized Borsuk--Ulam theorem.

\begin{corollary}\label{borsuk--ulam generalization}
	Let $m$, $n$, and $k$ be integers satisfying $1 \leq m \leq n - k$. If $f : St(n,k) \to \real^m$ is continuous, then the set $\{ V \in St(n,k) \st f(S(V)) = f(V) \}$ is $S$-invariant and has Yang index at least $n - k - m$.
\end{corollary}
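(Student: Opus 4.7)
The plan is to reduce everything to a single standard property of the Yang index applied to the $\integer_2$-equivariant map $g(V) = f(V) - f(S(V))$ whose zero set is precisely the set $A = \{V \in St(n,k) \st f(S(V)) = f(V)\}$ in question.

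First I would verify $S$-invariance of $A$ directly: if $V \in A$, then because $S^2 = \mathrm{id}$, one has $f(S(S(V))) = f(V) = f(S(V))$, so $S(V) \in A$. Next, to establish the index lower bound, I would introduce $g : St(n,k) \to \real^m$ by $g(V) = f(V) - f(S(V))$ and observe that $g(S(V)) = -g(V)$, so $g$ is $\integer_2$-equivariant when $\real^m$ carries the antipodal action. Since $A = g^{-1}(0)$, the normalization $V \mapsto g(V)/\|g(V)\|$ is a well-defined $\integer_2$-equivariant map from $St(n,k) \setminus A$ to $S^{m-1}$. At this point I would invoke the following standard property of $\nu$, which is essentially what makes the Yang index useful as a Borsuk--Ulam-type invariant: if $B$ is a closed invariant subspace of a paracompact free $\integer_2$-space $X$ whose complement admits a $\integer_2$-equivariant map to $S^{m-1}$, then $\nu(B) \geq \nu(X) - m$. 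Combined with Theorem \ref{main theorem}, this yields $\nu(A) \geq (n - k) - m$.

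Finally, to rule out the existence of a $\integer_2$-equivariant map $h : St(n,k) \to S^{m-1}$, I would take $f = h$ regarded as a map into $\real^m$. Then equivariance of $h$ gives $h(S(V)) = -h(V)$, so $A = \{V \st h(V) = -h(V)\} = \{V \st h(V) = 0\}$, which is empty because $h$ lands in $S^{m-1}$. This contradicts $\nu(A) \geq n - k - m \geq 0$, the latter inequality valid precisely under the hypothesis $m \leq n - k$.

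The main obstacle, such as it is, lies in correctly citing (or reproving) the index--subtraction property for zero sets of equivariant maps to $\real^m$; this is classical and traces back to Yang's original paper, but I would want to state it cleanly in the paper's notational conventions. Everything else is equivariance bookkeeping, and the substantive content of the corollary is the bound $\nu(St(n,k)) \geq n - k$ supplied by Theorem \ref{main theorem}.
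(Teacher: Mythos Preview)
Your proposal is correct and matches the paper's (implicit) argument: the corollary is simply Yang's theorem, quoted verbatim in the preliminaries, applied with $X = St(n,k)$ together with the bound $\nu(St(n,k),S) \geq n-k$ from Theorem~\ref{main theorem}. Your write-up merely unpacks Yang's theorem via the standard odd map $g = f - f\circ S$ and the index-subtraction property, which is exactly the content the paper is citing; the nonexistence of an equivariant map to $S^{m-1}$ likewise follows directly from Lemma~\ref{index comparison}(2) and $\nu(S^{m-1}) = m-1 < n-k$.
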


\noindent In particular, it follows that there does not exist an equivariant map from $St(n,k)$ into $S^{m-1}$ for $1 \leq m \leq n - k$. Similar results hold for $G(n,k)$.

When $k = 2$, it is possible to characterize the valid matrices combinatorially. By combining that observation with the additional flexibility that working in the Grassmannian affords, one is able to improve the above bound for $G(n,2)$ when $n$ is odd.

\begin{theorem}\label{k = 2 case}
	Let $n \geq 3$ be an odd integer. Then, $\nu(G(n,2)) \geq n - 1$.
\end{theorem}

\noindent This implies a corresponding Borsuk--Ulam result.

\begin{corollary}\label{k = 2 borsuk--ulam generalization}
	Let $n \geq 3$ be an odd integer and $1 \leq m \leq n - 1$ an integer. If $f : G(n,2) \to \real^m$ is continuous, then the set $\{ P \in G(n,k) \st f(T(P)) = f(P) \}$ is $T$-invariant and has Yang index at least $n - m - 1$.
\end{corollary}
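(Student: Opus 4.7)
The plan is to deduce the corollary from Theorem \ref{k = 2 case}(a) using two standard functorial properties of the Yang index of a free $\integer_2$-space: monotonicity, $\nu(X) \leq \nu(Y)$ whenever there is a $\integer_2$-equivariant map $X \to Y$; and Yang's coincidence estimate, which bounds the Yang index of the zero set of an equivariant map into a Euclidean space on which $\integer_2$ acts by negation.

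First, I would form the antisymmetrization $g : G(n,2) \to \real^m$ by
\[
	g(P) = f(P) - f(T(P))\textrm{.}
\]
Because $T^2 = \mathrm{id}$, the identity $g(T(P)) = -g(P)$ holds, so $g$ is $\integer_2$-equivariant, and its zero set $A = g^{-1}(0)$ is precisely the set in the statement; in particular, $A$ is $T$-invariant. Since $T$ reverses orientation on $G(n,2)$, the action is fixed-point free, and the Yang index is defined. Yang's coincidence theorem then gives
\[
	\nu(A) \geq \nu(G(n,2), T) - m \geq (n - 1) - m = n - m - 1\textrm{,}
\]
where the second inequality is Theorem \ref{k = 2 case}(a).

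For the nonexistence of a $\integer_2$-equivariant map $F : G(n,2) \to S^{m-1}$, existence of such an $F$ would yield, by monotonicity, $n - 1 \leq \nu(G(n,2), T) \leq \nu(S^{m-1}) = m - 1$, contradicting the hypothesis $m \leq n - 1$; alternatively, composing $F$ with the inclusion $S^{m-1} \hookrightarrow \real^m$ produces an equivariant map whose zero set is empty, so that $\nu(\varnothing) = -1 < n - m - 1$ contradicts the estimate above.

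The essential content of the corollary is Theorem \ref{k = 2 case}(a); the remaining work is a formal deduction from classical properties of the Yang index, and so no serious obstacle arises beyond checking that the specific formulation of $\nu$ adopted in the paper satisfies monotonicity and the coincidence estimate (both of which trace back to \cite{Yang1954}).
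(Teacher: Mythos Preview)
Your proposal is correct and is essentially the approach the paper intends: the corollary is stated without proof because it follows immediately from Theorem~\ref{k = 2 case}(a) together with Yang's theorem quoted in the preliminaries, which already asserts that for any continuous $f : X \to \real^m$ the coincidence set has Yang index at least $\nu(X,T) - m$ and that no $\integer_2$-equivariant map into $S^{m-1}$ exists when $m \leq \nu(X,T)$. Your explicit antisymmetrization step is the standard way to derive Yang's theorem, so the content is the same.
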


\noindent Thus, for odd $n$, there does not exist an equivariant map from $G(n,2)$ into $S^{m-1}$ when $1 \leq m \leq n - 1$. This nonexistence was previously proved by Conner--Floyd \cite{ConnerFloyd1962}; in fact, it follows from their work that no such map exists from $St(n,2)$ for $n \neq 2, 4, \textrm{ or }8$. For odd $n$, this latter result also follows from recent work of Basu--Kundu \cite{BasuKundu2024} on the Faddell--Husseini indices of Stiefel manifolds. Like much of the earlier work discussed above, Basu--Kundu's results were proved using spectral sequences. The techniques of this paper, which were developed independently, are much more elementary in nature.\\

\noindent \textit{Organization of the paper.} Section 2 contains preliminaries, including background material on the Yang index in Subsection 2.1 and Stiefel manifolds and Grassmannians in Subsection 2.2. The notion of a valid matrix is developed in Subsection 2.3. The main theorems are proved in Section 3, with the general case in Subsection 3.1 and the $k = 2$ case in Subsection 3.2. Section 4 discusses the relationship between the Yang index and the Conner--Floyd index and coindex, develops some of their basic properties for Stiefel manifolds and Grassmannians, and computes the possible values of $\nu(St(n,k))$ and $\nu(G(n,k))$ for small values of $n$.

\section{Preliminaries}

\subsection{Yang index}

The results in this subsection are due to Yang in \cite{Yang1954}. For details and applications, one may consult \cite{Yang1954} and its companion paper \cite{Yang1955}. Excellent resources for broader perspectives on the Borsuk--Ulam theorem and more general fixed point theory are \cite{Matousek2003} and \cite{GranasDugundji2003}, respectively. Throughout this paper, all homology groups are computed with $\integer_2$ coefficients, and equivariance is always meant with respect to the appropriate $\integer_2$-actions.

A \textbf{$T$-space} is a pair $(X,T)$, where $X$ is a compact and Hausdorff topological space and $T$ is a fixed-point free involution on $X$. A $T$-space $(X,T)$ is \textbf{simplicial} if $X$ is a finite Euclidean simplicial complex whose simplices are permuted by $T$. In this case, an $m$-chain $c$ is \textbf{$T$-invariant} if $T(c) = c$. This is equivalent to $c = d + T(d)$ for some $m$-chain $d$. If $C_m(X,T)$ denotes the set of $T$-invariant $m$-chains, then the boundary map $\partial_{m+1}$ takes $C_{m+1}(X,T)$ into $C_m(X,T)$. Thus, one may define the \textbf{$T$-homology group} $H_m(X,T) = Z_m(X,T)/B_m(X,T)$, where $Z_m(X,T)$ is the kernel of $\partial_m$ and $B_m(X,T)$ is the image of $\partial_{m+1}$.  By \u{C}ech-Smith theory, these extend from simplicial to arbitrary $T$-spaces; in general, $H_m(X,T)$ is isomorphic to the $m$-th \u{C}ech homology group of $X/T$.

If $(X,T)$ is simplicial, the \textbf{index} $\nu(c)$ of a chain $c \in Z_0(X,T)$ that splits as $c = d + T(d)$ is defined to be $1$ if the cardinality of $d$ is odd and $0$ if even. The \textbf{index} of $c \in Z_m(X,T)$ is then defined inductively by $\nu(c) = \nu(\partial d)$ for any splitting $c = d + T(d)$. This defines a homomorphism $\nu : H_m(X,T) \to \integer_2$ for all $m$, which, in turn, generalizes to arbitrary $T$-spaces. The \textbf{Yang index} $\nu(X,T)$ of $(X,T)$ is the largest integer $m$ such that $\nu(H_m(X,T)) \neq \{ 0 \}$.

\begin{lemma}\label{index comparison}
	Let $(X,T)$ be a $T$-space. Then, the following hold:\\
	\textbf{(a)} $\nu(X,T)$ is the largest integer $m$ such that, for every $S$-space $(Y,S)$ and every equivariant map $f : X \to Y$, $f_*(H_m(X,T)) \neq \{ 0 \}$.\\
	\textbf{(b)} If $(Y,S)$ is an $S$-space and there exists an equivariant map from $X$ to $Y$, then $\nu(Y,S) \geq \nu(X,T)$.
\end{lemma}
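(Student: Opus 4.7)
Both parts of the lemma rest on the naturality of the index homomorphism $\nu$ under equivariant maps: for every continuous $\integer_2$-equivariant $f : (X,T) \to (Y,S)$ and every $\alpha \in H_m(X,T)$, one has $\nu(f_*(\alpha)) = \nu(\alpha)$. I would establish this identity first and read the lemma off of it.

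After reducing to the simplicial setting via the \u{C}ech-theoretic extension mentioned in the preliminaries, one may, after an equivariant subdivision, assume $f$ is itself simplicial and equivariant. If $c = d + T(d)$ is a splitting of a representative cycle, equivariance gives $f_\#(c) = f_\#(d) + S(f_\#(d))$, so $f_\#(d)$ is a valid splitting half of $f_\#(c)$. Since $\partial f_\#(d) = f_\#(\partial d)$, the inductive rule $\nu(c) = \nu(\partial d)$ reduces naturality to the case $m = 0$. In dimension zero, freeness of both actions ensures that each $T$-orbit in $\mathrm{supp}(c)$ is sent to a genuine $S$-orbit in $Y$; although several $T$-orbits may collapse into a single $S$-orbit, such an $S$-orbit appears in $\mathrm{supp}(f_\#(c))$ exactly when it is hit by an odd number of $T$-orbits, and a parity count then shows that the number of surviving $S$-orbits matches, modulo $2$, the number of $T$-orbits in $\mathrm{supp}(c)$, establishing $\nu(f_\#(c)) = \nu(c)$.

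With naturality in hand, (2) is immediate: pick $\alpha \in H_{\nu(X,T)}(X,T)$ with $\nu(\alpha) = 1$; then $\nu(f_*(\alpha)) = 1$, so $\nu(Y,S) \geq \nu(X,T)$. The ``$\geq$'' direction of (1) is the same calculation, read as $f_*(\alpha) \neq 0$, so $f_*(H_{\nu(X,T)}(X,T)) \neq \{0\}$. For the ``$\leq$'' direction of (1), whenever $m > \nu(X,T)$, I would argue that the double cover $X \to X/T$ is classified by a map $X/T \to \mathbb{RP}^\infty$ which, after a suitable approximation, factors through $\mathbb{RP}^{m-1}$ and lifts to an equivariant $X \to S^{m-1}$; since $H_m(S^{m-1},\mathrm{antip}) = 0$, this map kills $H_m(X,T)$.

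The hardest step will be this converse direction of (1), which relies on an obstruction-theoretic or cohomological reinterpretation of $\nu(X,T)$ not spelled out in the preliminaries. The naturality claim itself, together with (2) and the forward direction of (1), amounts to careful chain-level bookkeeping once equivariant simplicial approximation is available.
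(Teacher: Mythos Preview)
The paper does not prove this lemma at all; it is stated in the preliminaries and attributed to Yang \cite{Yang1954}, so there is no in-paper argument to compare against. Your overall plan via naturality of the index homomorphism is the standard one and is correct for part~(2) and for the forward half of~(1). The chain-level verification of $\nu\circ f_*=\nu$ that you sketch (equivariant simplicial approximation, compatibility of splittings under $f_\#$, and the parity count in degree zero) is fine.

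The genuine gap is in your converse direction of~(1). You claim that when $m>\nu(X,T)$ the classifying map $X/T\to\mathbb{RP}^\infty$ factors through $\mathbb{RP}^{m-1}$, yielding an equivariant $X\to S^{m-1}$. But that factorisation is governed by obstructions in $H^{k+1}(X/T;\pi_k(S^{m-1}))$ for all $k\geq m-1$, and only the primary one is controlled by $w_1^m$; the higher ones need not vanish merely because the Yang index is below $m$ (already for $m=3$ the groups $\pi_k(S^2)$ are highly nontrivial). The remedy is to map to a \emph{large} sphere rather than a small one: for $N\geq\dim(X/T)$ the classifying map factors through $\mathbb{RP}^N$ by cellular (or \u{C}ech) approximation alone, giving an equivariant $f:X\to S^N$. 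For each $m\leq N$ one has $H_m(S^N,\mathrm{antip})\cong\integer_2$ with $\nu$ an isomorphism there, so the naturality relation $\nu\circ f_*=\nu$ forces $f_*:H_m(X,T)\to\integer_2$ to coincide with the index homomorphism itself. Hence $f_*(H_m(X,T))=\nu(H_m(X,T))=0$ whenever $m>\nu(X,T)$, which is exactly what the converse of~(1) requires.
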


\noindent The model $T$-space of dimension $n$ is the sphere $S^n$ with its usual antipodal action, which has Yang index $n$. More generally, if $(X,T)$ is a $T$-space, then $x,y \in X$ are \textbf{antipodal} if $y = T(x)$. The following is a substantial generalization of the classical theorem of Borsuk--Ulam.

\begin{theorem}\label{yang theorem}[Yang]
	Let $(X,T)$ be a $T$-space. If $\nu(X,T) = n$ and $m \leq n$, then every map $f : X \to \real^m$ takes two antipodal points to the same point. Moreover, the set $\{ x \in X \st f(x) = f(T(x)) \}$ is a $T$-space of Yang index at least $n - m$.
\end{theorem}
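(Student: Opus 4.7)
The plan is to combine the standard equivariant-auxiliary-map trick with a subadditivity property of the Yang index. First I would set $g : X \to \real^m$ by $g(x) = f(x) - f(T(x))$, which is $\integer_2$-equivariant when $\real^m$ carries the antipodal involution, and let $A = g^{-1}(0) = \{ x \in X \st f(x) = f(T(x)) \}$. Since $g \circ T = -g$, the function $|g|$ is $T$-invariant, so $A$ is closed and $T$-invariant; because $T$ is fixed-point free on $X$, the restriction $T|_A$ remains so, and $(A, T|_A)$ is a $T$-space. The first half of the theorem then falls out immediately: if $A$ were empty, $x \mapsto g(x) / |g(x)|$ would be a $\integer_2$-equivariant map from $X$ into $S^{m-1}$, and Lemma~\ref{index comparison}(2) would force $n = \nu(X,T) \leq \nu(S^{m-1}) = m - 1$, violating $m \leq n$.

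For the lower bound on $\nu(A, T|_A)$, I would cover $X$ by two closed $T$-invariant sets, one a thin neighborhood of $A$ and the other bounded away from $A$. For each $\epsilon > 0$, set
\[
	A_\epsilon = \{ x \in X \st |g(x)| \leq \epsilon \}, \qquad B_\epsilon = \{ x \in X \st |g(x)| \geq \epsilon / 2 \},
\]
both $T$-invariant since $|g|$ is, with $X = A_\epsilon \cup B_\epsilon$. The map $x \mapsto g(x)/|g(x)|$ restricts to a $\integer_2$-equivariant map from $B_\epsilon$ into $S^{m-1}$, so Lemma~\ref{index comparison}(2) gives $\nu(B_\epsilon, T|_{B_\epsilon}) \leq m - 1$.

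The core step is to invoke subadditivity of the Yang index: if $X = P \cup Q$ with $P$ and $Q$ closed and $T$-invariant, then $\nu(X,T) \leq \nu(P, T|_P) + \nu(Q, T|_Q) + 1$, a fact proved in \cite{Yang1954} via a Mayer--Vietoris argument on $T$-homology. Applied to $X = A_\epsilon \cup B_\epsilon$ this yields
\[
	n \leq \nu(A_\epsilon, T|_{A_\epsilon}) + (m - 1) + 1,
\]
so $\nu(A_\epsilon, T|_{A_\epsilon}) \geq n - m$ for every $\epsilon > 0$. Finally, because $A = \bigcap_{\epsilon > 0} A_\epsilon$ is a nested intersection of compact $T$-spaces, the continuity of \u{C}ech $T$-homology transfers the bound to $\nu(A, T|_A) \geq n - m$.

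The hard part will be a clean invocation of the two \u{C}ech-theoretic inputs at the end, namely subadditivity under closed covers and continuity under nested intersections; both sit slightly outside the purely simplicial definition of $\nu$ recalled in the preliminaries, so I would cite the relevant parts of \cite{Yang1954} rather than rederive them. Everything else reduces to direct manipulation of the single equivariant map $g$.
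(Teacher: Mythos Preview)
The paper does not prove this theorem; it is stated in the preliminaries as a result of Yang and attributed to \cite{Yang1954}, with no argument given beyond the citation. Your sketch is correct and is essentially the argument Yang himself gave: form the odd map $g(x)=f(x)-f(T(x))$, bound the index of the complement of the coincidence set via an equivariant map to $S^{m-1}$, and transfer the bound to $A$ using the subadditivity of $\nu$ under closed $T$-invariant covers together with the continuity of \u{C}ech $T$-homology on nested intersections. Your caveat that the last two inputs must be cited from \cite{Yang1954} rather than derived from the simplicial definition in the preliminaries is exactly right.
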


\noindent In particular, if $\nu(X,T) = n$, then there does not exist an equivariant map from $X$ into $S^{n-1}$. A variety of equivalent conclusions are stated in \cite{Yang1954}.

\subsection{Stiefel manifolds and Grassmannians}

The real Stiefel manifold $St(n,k)$ is the set of orthonormal $k$-tuples of vectors in $\real^n$, that is,
\[
	St(n,k) = \{ (v_1,\ldots,v_k) \st v_i \in \real^n \textrm{ and } v_i \cdot v_j = \delta_{i,j} \textrm{ for all } i,j \}\textrm{,}
\]
where $\cdot$ is the usual Euclidean dot product and $\delta_{i,j}$ is the Kronecker delta. This is naturally identified with the set of $n \times k$ matrices with orthonormal columns by the map that takes $(v_1,\ldots,v_k)$ to the matrix with columns $v_1,\ldots,v_k$. Thus, each element of $St(n,k)$ may be throught of as an $n \times k$ matrix $V$ satisfying $V^T V = I_k$, where $I_k$ is the $k \times k$ identity matrix. In this way, $St(n,k)$ is a smooth submanifold of $\real^{n+k}$ of dimension $nk - k(k+1)/2$. Much about the topology of Stiefel manifolds is discussed in \cite{James1976}.

The oriented real Grassmannian $G(n,k)$ is the set of oriented $k$-planes through the origin in $\real^n$, which for $k > 0$ is realized as the quotient
\[
	G(n,k) = St(n,k)/SO(k)\textrm{,}
\]
where the action of the special orthogonal group $SO(k)$ is on the right. It is a smooth manifold of dimension $k(n-k)$.

The map $S$ defined on $St(n,k)$ by $S(v_1,\ldots,v_k) = (v_1,\ldots,v_{k-1},-v_k)$ is a free involution, as is the map $T$ on $G(n,k)$ that reverses the orientation of each $k$-plane. Thus, $(St(n,k),S)$ and $(G(n,k),T)$ are $S$- and $T$-spaces, respectively. Throughout the rest of the paper, the $\integer_2$-actions on $St(n,k)$ and $G(n,k)$ will always be $S$ and $T$, respectively, and will be suppressed in the notation. Since the quotient map from $St(n,k)$ to $G(n,k)$ is equivariant, it follows from Lemma \ref{index comparison} that $\nu(G(n,k)) \geq \nu(St(n,k))$. The quotient map from $G(n,k)$ into the unoriented Grassmannian $\tilde{G}(n,k) = G(n,k)/T$ is a double cover, and each $T$-homology group $H_m(G(n,k),T)$ is isomorphic to the singular homology group $H_m(\tilde{G}(n,k))$, since \u{C}ech and singular homology are equivalent for manifolds (cf. \cite{Mardesic1959}).

\subsection{Valid matrices}

Let $m$, $n$, and $k$ be integers, where $m$ is nonnegative, $n$ and $k$ are positive, and $k \leq n$. Write the standard basis vectors of $\real^{m+1}$ as $e_1,\ldots,e_{m+1}$, where $e_i$ has $1$ in the $i$-th component and zeros everywhere else. Denote the standard $m$-simplex in $\real^{m+1}$ by
\[
	\Delta^m = \Big\{ \sum_{i=1}^{m+1} t_i e_i \st t_i \geq 0 \textrm{ for each } i \textrm{ and } \sum_{i=1}^{m+1} t_i = 1 \Big\}\textrm{.}
\]
For any list $V = (v_1,\ldots,v_{m+1})$ of unit vectors in $\real^n$, let $\ell_V : \Delta^m \to \real^n$ be the restriction to $\Delta^m$ of the linear map that takes each $e_i$ to $v_i$. For any list $\mathscr{V} = (V_1,\ldots,V_k)$ of such lists, define a map $\ell_{\mathscr{V}} : \Delta^m \to (\real^n)^k$ by $\ell_\mathscr{V} = (\ell_{V_1},\ldots,\ell_{V_k})$. Applying the Gram--Schmidt process to $\ell_{\mathscr{V}}$ produces a singular $m$-simplex $L_{\mathscr{V}} : \Delta^m \to St(n,k)$ exactly when $\ell_{\mathscr{V}}(T)$ is linearly independent for each $T \in \Delta^m$. When this happens, the list $\mathscr{V}$ will be called \textbf{valid}. If $\mathscr{V}$ is valid, any sublist of $\mathscr{V}$ is also valid.

Clearly, no list can be valid if it contains component lists $V_1$ and $V_2$ such that, for some $i$, $V_1^i = -V_2^i$ (here, $V^i$ denotes the $i$-th entry of $V$). More generally, a list is valid if and only if, for each $b = (b_1,\ldots,b_k) \in \real^k$, the origin is not a convex combination of the vectors $\sum_{i=1}^k b_k V_k^1, \ldots, \sum_{i=1}^k b_k V_k^{m+1}$. In other words, for the \textbf{induced matrix}
\[
	M(\mathscr{V},b) = \Big[ \sum_{i=1}^k b_k V_k^1 \cdots \sum_{i=1}^k b_k V_k^{m+1} \Big]
\]
of the pair $(\mathscr{V},b)$, one has the following characterization of valid lists.

\begin{lemma}\label{valid list}
	The list $\mathscr{V} = (V_1,\ldots,V_k)$ of lists of $m+1$ unit vectors in $\real^n$ is valid if and only if, for each $b \in \real^k$, the convex hull of the columns of $M(\mathscr{V},b)$ does not contain the origin.
\end{lemma}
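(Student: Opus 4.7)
The plan is to unravel the definitions and observe that the characterization is essentially tautological, with the key identity being a rearrangement of a double sum.

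First, I would translate the validity hypothesis into precise terms. For a point $T = \sum_{j=1}^{m+1} t_j e_j \in \Delta^m$, one has $\ell_{V_i}(T) = \sum_{j=1}^{m+1} t_j V_i^j$, so validity of $\mathscr{V}$ means that for every such $T$ the vectors $\ell_{V_1}(T),\ldots,\ell_{V_k}(T)$ are linearly independent in $\real^n$. Equivalently, there is no pair $(b,T)$ with $b = (b_1,\ldots,b_k) \in \real^k$ nonzero and $T \in \Delta^m$ satisfying
\[
    \sum_{i=1}^{k} b_i \, \ell_{V_i}(T) \;=\; 0.
\]

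Second, I would interchange the order of summation in this identity: for any $b \in \real^k$ and any $T \in \Delta^m$,
\[
    \sum_{i=1}^{k} b_i \, \ell_{V_i}(T)
    \;=\; \sum_{i=1}^{k} b_i \sum_{j=1}^{m+1} t_j V_i^j
    \;=\; \sum_{j=1}^{m+1} t_j \Bigl( \sum_{i=1}^{k} b_i V_i^j \Bigr).
\]
The parenthesized expression is precisely the $j$-th column of $M(\mathscr{V},b)$. Since $t_j \geq 0$ and $\sum_j t_j = 1$, the right-hand side is exactly a convex combination of the columns of $M(\mathscr{V},b)$, and every convex combination of those columns arises from some $T \in \Delta^m$. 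Thus the existence of $(b,T)$ with $b \neq 0$ witnessing linear dependence is equivalent to the existence of some $b \neq 0$ for which the origin lies in the convex hull of the columns of $M(\mathscr{V},b)$.

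Taking the contrapositive yields the claimed equivalence (with the understanding, already implicit in the discussion preceding the lemma, that one restricts to nonzero $b$, since for $b = 0$ the induced matrix is the zero matrix and the statement is vacuous). The main obstacle here is essentially notational rather than mathematical: one must keep clear the distinction between indices running over the $k$ lists and those running over the $m+1$ unit vectors within each list, and verify that the convex hull of the columns of $M(\mathscr{V},b)$ is exactly the image of $\Delta^m$ under the linear map $T \mapsto M(\mathscr{V},b)\,T$.
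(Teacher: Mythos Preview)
Your argument is correct and matches the paper's approach: the paper treats this lemma as an immediate restatement of the definitions, asserting the equivalence in the paragraph preceding the lemma without further proof. Your write-up simply makes explicit the interchange of summation that the paper leaves implicit, and correctly flags the harmless convention that $b=0$ is excluded.
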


\noindent By Farkas' lemma \cite{Farkas1902}, this is equivalent to, for each $b \in \real^k$, the existence of a hyperplane in $\real^n$ separating the origin from the columns of $M(\mathscr{V},b)$.

For each $k \times (m+1)$ matrix $A$ with entries in $\{ \pm 1,\ldots, \pm n\}$, i.e.,
\[
	A = \left[ \begin{array}{ccc} s_{1,1} I_{1,1} & \cdots & s_{1,m+1} I_{1,m+1} \\ \vdots & \phantom{ } & \vdots \\ s_{k,1} I_{k,1} & \cdots & s_{k,{m+1}} I_{k,{m+1}} \end{array} \right]\textrm{,} 
\]
where $s_{i,j} \in \{ 1, -1 \}$ and $I_{i,j} \in \{ 1, \ldots, n \}$, let $A_i = (s_{i,1} e_{I_{i,1}}, \ldots, s_{i,m+1} e_{I_{i,m+1}})$, and write $\mathscr{A} = (A_1,\ldots,A_k)$. Overloading notation, define the \textbf{induced matrix} $M(A,b)$ of the pair $(A,b)$, where $b \in \real^k$, to be the induced matrix of $(\mathscr{A},b)$. When $\mathscr{A}$ is a valid list with distinct columns, the matrix $A$ will be called \textbf{valid}. In this case, $L_{\mathscr{A}}$ is a singular $m$-simplex in $St(n,k)$, the vertices of which are frames whose component vectors lie on the coordinate axes in $\real^n$.

\section{Proof of the main theorems}

\subsection{General approach}

Let $n$ and $k$ be positive integers such that $k \leq n$. Create an abstract simplicial complex $X(n,k)$ whose elements are the sets of columns of valid $k \times (m+1)$ matrices with entries in $\{ \pm 1,\ldots, \pm n \}$, i.e.,
\[
	X(n,k) = \big\{ \{ a_1,\ldots,a_{m+1} \} \st a_i \in \{ \pm 1,\ldots,\pm n \}^k \textrm{ and } [a_1 \cdots a_{m+1}] \textrm{ is a valid matrix} \big\}\textrm{.}
\]
Since valid matrices have distinct columns, the identification of valid matrices with sets of column vectors is one-to-one up to permutation of the columns. Let $\mathscr{T}(n,k)$ be the total space of $X(n,k)$. That is,
\[
	\mathscr{T}(n,k) = \coprod_{\{a_1,\ldots,a_m\} \in X(n,k)} \Delta(a_1,\ldots,a_m) \,/ \sim\textrm{,}
\]
where each $\Delta(a_1,\ldots,a_{m+1})$ is a copy of the standard $m$-simplex whose vertices are identified with $a_1,\ldots,a_{m+1}$ in some order and $\sim$ identifies faces defined by the same vertex sets.

Define a fixed-point free involution $\tau$ on $\mathscr{T}(n,k)$ by $\tau(\sum_{i=1}^{m+1} t_i a_i) = \sum_{i=1}^{m+1} t_i \tilde{a}_i$, where $\tilde{a}_i$ is obtained from $a_i$ by multiplying the last component by $-1$. Since there are only finitely many valid matrices with $k$ rows and entries in $\{ \pm 1,\ldots, \pm n\}$, $X(n,k)$ is a finite complex, $\mathscr{T}(n,k)$ is compact, and $(\mathscr{T}(n,k),\tau)$ is a $\tau$-space.

For each $\{a_1,\ldots,a_{m+1}\} \in X(n,k)$, there is a unique list $\mathscr{A}$ containing the $a_i$ in the order they correspond to the standard basis vectors within $\Delta(a_1,\ldots,a_{m+1})$. The map $L_\mathscr{A}$ is a singular $m$-simplex into $St(n,k)$, and one may define an inclusion $\iota_{n,k}$ from $\mathscr{T}(n,k)$ into $St(n,k)$ that agrees with the appropriate $L_\mathscr{A}$ on each face. Since $\iota_{n,k}$ is equivariant, the Yang indices of the spaces satisfy the following bounds.

\begin{lemma}
	For each $1 \leq k \leq n$, $\nu(G(n,k)) \geq \nu(St(n,k)) \geq \nu(\mathscr{T}(n,k))$.
\end{lemma}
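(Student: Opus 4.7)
The plan is to apply Lemma \ref{index comparison}(2) twice. The first inequality $\nu(G(n,k),T) \geq \nu(St(n,k),S)$ is immediate from the observation already made in Section 2: the quotient map $St(n,k) \to G(n,k)$ is continuous and $\integer_2$-equivariant with respect to $S$ and $T$. Hence the substance of the lemma lies in establishing $\nu(St(n,k),S) \geq \nu(\mathscr{T}(n,k),\tau)$, for which it suffices to confirm that the map $\iota_{n,k} : \mathscr{T}(n,k) \to St(n,k)$ sketched in the paragraph preceding the statement is well-defined, continuous, and $\integer_2$-equivariant.

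First, on each simplex $\sigma = \{a_1,\ldots,a_{m+1}\} \in X(n,k)$, I would fix the unique ordered list $\mathscr{A} = (A_1,\ldots,A_k)$ whose columns match the vertex-labeling of $\Delta(a_1,\ldots,a_{m+1})$. Validity of $\mathscr{A}$ guarantees that $\ell_\mathscr{A}(T)$ consists of linearly independent vectors for every $T \in \Delta^m$, so pointwise Gram--Schmidt produces the continuous singular simplex $L_\mathscr{A}$. Second, to glue these maps consistently I would verify face-compatibility: for any face $\sigma' \subset \sigma$, the corresponding sublist $\mathscr{A}'$ of $\mathscr{A}$ is again valid (as noted after the definition of validity), and because Gram--Schmidt acts pointwise, $L_{\mathscr{A}'}$ coincides with the restriction of $L_\mathscr{A}$ to the corresponding sub-simplex. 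This lets the individual $L_\mathscr{A}$ assemble into a continuous map $\iota_{n,k}$ on $\mathscr{T}(n,k)$.

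Third, for $\integer_2$-equivariance, I would observe that $\tau$ replaces each vertex $a_i$ with $\tilde{a}_i$ obtained by negating the last coordinate, which at the level of ordered lists replaces $A_k$ by $-A_k$ while fixing $A_1,\ldots,A_{k-1}$. Then $\ell_{\tau \mathscr{A}}$ differs from $\ell_\mathscr{A}$ only in the $k$-th slot, where its values are negated. Since Gram--Schmidt applied to $(w_1,\ldots,w_{k-1},-w_k)$ yields $(u_1,\ldots,u_{k-1},-u_k)$, the output of the Gram--Schmidt process on $\ell_{\tau \mathscr{A}}$ differs from that on $\ell_\mathscr{A}$ exactly in the sign of the last column, i.e., by the action of $S$. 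Thus $\iota_{n,k} \circ \tau = S \circ \iota_{n,k}$, and Lemma \ref{index comparison}(2) completes the argument. The main obstacle I anticipate is purely organizational: $X(n,k)$ is defined in terms of unordered sets of columns while the Gram--Schmidt construction requires an ordered list $\mathscr{A}$. One must choose a consistent ordering convention tied to the vertex-labeling of each $\Delta(a_1,\ldots,a_{m+1})$ and verify that the face-compatibility check respects this convention; once that bookkeeping is settled, the three steps above are essentially mechanical.
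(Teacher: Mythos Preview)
Your proposal is correct and follows exactly the approach the paper takes: the paper establishes the lemma by noting that the quotient map $St(n,k)\to G(n,k)$ and the inclusion $\iota_{n,k}:\mathscr{T}(n,k)\to St(n,k)$ are both $\integer_2$-equivariant, then applying Lemma~\ref{index comparison}(2). You have simply supplied the details the paper leaves implicit---the face-compatibility of the $L_\mathscr{A}$ and the Gram--Schmidt computation showing $\iota_{n,k}\circ\tau=S\circ\iota_{n,k}$---and your bookkeeping concern about ordered versus unordered columns is handled in the paper exactly as you suggest, via the vertex-labeling of each $\Delta(a_1,\ldots,a_{m+1})$.
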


\noindent Define $\tilde{\mathscr{T}}(n,k)$ to be the quotient of $\mathscr{T}(n,k)$ that identifies faces along which $\iota_{n,k}$ spans the same oriented subspace of $\real^n$ at each point. Then, $\tau$ descends to a fixed-point free involution $\tilde{\tau}$ on $\tilde{\mathscr{T}}(n,k)$, $(\tilde{\mathscr{T}}(n,k),\tilde{\tau})$ is a $\tilde{\tau}$-space, and $\iota_{n,k}$ descends to an equivariant map $\tilde{\iota}_{n,k}$ from $\tilde{\mathscr{T}}(n,k)$ into $G(n,k)$. Thus, the following index bounds also hold.

\begin{lemma}
	For each $1 \leq k \leq n$, $\nu(G(n,k)) \geq \nu(\tilde{\mathscr{T}}(n,k)) \geq \nu(\mathscr{T}(n,k))$.
\end{lemma}

\noindent Each valid matrix corresponds to an ordering of the vertices of some face of $\mathscr{T}(n,k)$. Thus, the simplicial homology groups of $\mathscr{T}(n,k)$, including its $\tau$-homology groups, may be computed using chains of valid matrices. On these chains, the boundary operator has a simple expression: For a valid matrix $A = [a_1 \, \cdots \, a_{m+1}]$ with columns $a_1,\ldots,a_{m+1} \in \real^k$,
\[
	\partial A = \sum_{j=1}^{m+1} [a_1 \, \cdots \, \hat{a_j} \, \cdots \, a_{m+1}]\textrm{,}
\]
where $\hat{a_j}$ indicates the $j$-th column of $A$ has been removed.

The Yang index of $\mathscr{T}(n,k)$, and consequently that of $St(n,k)$, is at least $m$ whenever there exists a $\tau$-invariant chain $c$ of valid $k \times (m+1)$ matrices such that $\partial c = 0$ and $\nu(c) = 1$. The same is true in $\tilde{\mathscr{T}}(n,k)$, with the understanding that the chains are on equivalence classes of valid matrices, where two valid matrices are equivalent if they determine the same singular simplex in $G(n,k)$. The following characterization is a basic exercise in linear algebra.

\begin{lemma}\label{valid matrices}
	Let $A$ and $B$ be valid matrices corresponding to the lists $\mathscr{A}$ and $\mathscr{B}$, respectively. Then, $L_\mathscr{A}$ and $L_\mathscr{B}$ descend to the same singular simplex in $G(n,k)$ exactly when $A$ can be turned into $B$ via an even number of the following elementary row operations:\\
(1) Multiply any row by $-1$.\\
(2) Interchange any two rows.
\end{lemma}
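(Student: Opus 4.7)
My plan is to prove the two implications separately. The backward direction is a direct computation: each elementary row operation on $A$ corresponds to right-multiplying the $n \times k$ frame matrix $\ell_\mathscr{A}(t)$ by an element of $O(k) \setminus SO(k)$ --- a transposition matrix for a row swap, and a diagonal matrix with a single $-1$ entry for a row negation. An even number of such operations therefore corresponds to right-multiplication by an element of $SO(k)$, which preserves the oriented $k$-plane spanned by the columns at every $t \in \Delta^m$. Since Gram--Schmidt intertwines these right $O(k)$-actions, the induced maps from $L_\mathscr{A}$ and $L_\mathscr{B}$ into $G(n,k)$ agree.

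For the forward direction, suppose the induced maps into $G(n,k)$ agree. At each vertex $e_j$ of $\Delta^m$, the frames $F_j^A = \ell_\mathscr{A}(e_j)$ and $F_j^B = \ell_\mathscr{B}(e_j)$ are orthonormal signed coordinate frames spanning the same oriented $k$-plane, so $F_j^B = F_j^A R_j$ for a unique $R_j = (F_j^A)^T F_j^B \in SO(k)$. Because both frames have their columns along coordinate axes of $\real^n$, each $R_j$ must be a signed permutation matrix with $\det R_j = +1$. If all the $R_j$ are equal to a common $R$, then $B$ is obtained from $A$ by applying the row operations encoded by $R$, namely the transpositions realizing its underlying permutation together with the sign flips for its $-1$ entries, and the total parity of these operations is even because $\det R = +1$.

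The main obstacle is thus the constancy of the $R_j$. My plan is to use the relation $\ell_\mathscr{B}(t) = \ell_\mathscr{A}(t) M(t)$ on $\Delta^m$, where $M(t)$ is the unique continuous $k \times k$ matrix of positive determinant satisfying this equation (well-defined because $\ell_\mathscr{A}(t)$ has rank $k$ by validity) and with $M(e_j) = R_j$. Since $\ell_\mathscr{A}$ and $\ell_\mathscr{B}$ are linear in $t$, differentiating along a tangent direction $v$ yields
\[
	\ell_\mathscr{A}(t) \cdot \partial_v M(t) = \ell_\mathscr{B}(v) - \ell_\mathscr{A}(v) M(t)\textrm{,}
\]
and evaluating at $t = e_j$ with $v = e_{j'} - e_j$ gives $F_j^A \cdot \partial_v M(e_j) = F_{j'}^A (R_{j'} - R_j)$. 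A careful analysis of this identity, exploiting the coordinate-aligned structure of $F_j^A$ and $F_{j'}^A$ to compare the column spans of the two sides, should force $R_j = R_{j'}$ along each edge of $\Delta^m$, and thus $R_j$ to be independent of $j$. This constancy argument is the step I expect to require the most care.
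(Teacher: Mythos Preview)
The paper does not actually prove this lemma; it merely states that ``the following characterization is a basic exercise in linear algebra.'' So there is no proof to compare against, and the relevant question is whether your outline can be completed.

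Your backward direction is essentially correct. One small imprecision: Gram--Schmidt does \emph{not} literally intertwine the right $O(k)$-action (the QR factorization of $VR$ is not $(QR)(R'R)$ in upper-triangular form), but this does not matter. What you need is only that if $\ell_\mathscr{B}(t) = \ell_\mathscr{A}(t)R$ with $R \in SO(k)$, then $\ell_\mathscr{A}(t)$ and $\ell_\mathscr{B}(t)$ span the same oriented $k$-plane, and hence so do $L_\mathscr{A}(t)$ and $L_\mathscr{B}(t)$, since Gram--Schmidt preserves both the span and the orientation.

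The forward direction is where the real content lies, and your sketch stops short of a proof. Your edge identity $F_j^A\,\partial_v M(e_j) = F_{j'}^A(R_{j'} - R_j)$ is correct, and it does force the columns of $F_{j'}^A(R_{j'} - R_j)$ to lie in $\mathrm{colspan}(F_j^A)\cap\mathrm{colspan}(F_{j'}^A)$. Concretely, if $S \subseteq \{1,\dots,k\}$ indexes those columns of $F_{j'}^A$ that are \emph{not} in $\mathrm{colspan}(F_j^A)$, you obtain that the rows of $R_{j'} - R_j$ indexed by $S$ vanish. By symmetry you get the analogous statement for the complementary set $S'$. But $S \cup S'$ need not be all of $\{1,\dots,k\}$ when the two vertex planes overlap, so the first-derivative edge argument alone does not force $R_j = R_{j'}$. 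You will need something more---either higher-order information along the edge, an argument at interior points of $\Delta^m$, or a polynomial-identity argument (e.g., the $(k+1)\times(k+1)$ minors of $[\ell_\mathscr{A}(t)\mid\ell_\mathscr{B}(t)]$ vanish identically in $t$, not just on $\Delta^m$). You correctly flag this step as the delicate one; as written, it is a genuine gap rather than a routine verification.
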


\noindent If either of the identifications in Lemma \ref{valid matrices} is required to show that $\partial c = 0$ or $\nu(c) = 1$, then the corresponding index bound might not hold on $\mathscr{T}(n,k)$, but it does on $\tilde{\mathscr{T}}(n,k)$ and, consequently, on $G(n,k)$. In this way, establishing lower bounds for $\nu(St(n,k))$ and $\nu(G(n,k))$ becomes a combinatorial exercise in finding appropriate chains of valid matrices.

In a chain of valid matrices, the symbol $\pm$ will be used to indicate summation over both possible signs of the entry in that spot. For example,
\[
	\left[ \begin{array}{cc} 1 & 1 \\ \pm 2 & \pm 3 \end{array} \right] = \sum_{s_1,s_2 \in \{-1,1\}} \left[ \begin{array}{cc} 1 & 1 \\ s_1 2 & s_2 3 \end{array} \right]\textrm{.}
\]
If a matrix contains $K$ distinct entries with $\pm$ in them, it represents the sum of $2^K$ individual matrices.

\begin{proof}[Proof of Theorem \ref{main theorem}]
	The $\tau$-invariant chain of valid $k \times (n-k+1)$ matrices
	\[
		\left[ \begin{array}{ccc} 1 & \cdots & 1 \\ \vdots & \phantom{a} & \vdots \\ k - 1 & \cdots & k - 1 \\ \pm k & \cdots & \pm n \end{array} \right]
	\]
	has vanishing boundary and nonzero Yang index.
\end{proof}

\begin{remark}
	The proof of Theorem \ref{main theorem} roughly corresponds to the fact that, once an orthonormal $(k-1)$-tuple $(v_1,\ldots,v_{k-1})$ in $\real^n$ is fixed, the map $v \mapsto (v_1,\ldots,v_{k-1},v)$ from the unit $(n-k)$-sphere in the orthogonal complement of $\{ v_1,\ldots,v_{k-1} \}$ is equivariant. In the above, $v_i = e_i$.
\end{remark}

\subsection{The case $k = 2$}

For large values of $k$ and $m$, it is not clear how to identify valid matrices at a glance. If $A$ is a $k \times (m+1)$ matrix with entries in $\{ \pm 1,\ldots,\pm n \}$ and associated list $\mathscr{A}$, $\det(\ell_{\mathscr{A}})$ is a polynomial on $\Delta^m$ that vanishes exactly when $A$ is invalid. In principle, validity may therefore be checked using numerical techniques, but those are generally inefficient. Fortunately, it will be shown that, for $k = 2$, the valid matrices admit a simple, combinatorial characterization.

If $A = [a_{i,j}]$ is a $k \times (m+1)$ matrix with entries in $\{ \pm 1, \ldots, \pm n \}$, a \textbf{circuit} in $A$ is a choice of distinct rows $i_0$ and $i_1$ and distinct columns $j_1,\ldots,j_p$ such that $a_{i_1,j_{\alpha}} = a_{i_0,j_{\alpha + 1}}$ for each $1 \leq \alpha \leq p - 1$ and $a_{i_1,j_p} = a_{i_0,j_1}$. An \textbf{anti-circuit} is the same, except $a_{i_1,j_{\alpha}} = -a_{i_0,j_{\alpha + 1}}$ and $a_{i_1,j_p} = -a_{i_0,j_1}$. Clearly, a matrix contains a circuit if and only if scaling some row by $-1$ creates an anti-circuit, and vice-versa. In either case, $p$ is the \textbf{length} of the circuit. Note that a circuit of length one exists in a matrix exactly when one of its columns contains the same entry twice; for an anti-circuit, it's when one column contains an entry and its negative.

\begin{lemma}\label{combinatorial characterization}
	A $2 \times (m+1)$ matrix $A$ with entries in $\{ \pm 1,\ldots, \pm n\}$ and distinct columns is invalid if and only if one of the following holds:\\
	\indent (1) One of its rows contains both $\alpha$ and $-\alpha$ for some $\alpha \in \{ 1,\ldots, n \}$.\\
	\indent (2) It contains a circuit or an anti-circuit.
\end{lemma}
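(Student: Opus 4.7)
My plan is to combine Lemma \ref{valid list} with a small graph-theoretic argument on the columns of $A$. Writing $\hat{e}_\beta = \mathrm{sign}(\beta) e_{|\beta|}$ for a signed integer $\beta$, the $j$-th column of $M(A, b)$ equals $b_1 \hat{e}_{a_{1,j}} + b_2 \hat{e}_{a_{2,j}}$, and $A$ is invalid exactly when some nonzero $b$ admits weights $t_j \geq 0$ with $\sum_j t_j = 1$ and $\sum_j t_j ( b_1 \hat{e}_{a_{1,j}} + b_2 \hat{e}_{a_{2,j}} ) = 0$. For the reverse implication I would produce such data explicitly. If row $i$ contains $\alpha$ in column $j_1$ and $-\alpha$ in column $j_2$, take $b = e_i$ with weight $1/2$ each on $j_1, j_2$. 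If $A$ contains a circuit on columns $j_1, \ldots, j_p$, take $b = (1, -1)$ and uniform weights $1/p$; by the circuit condition the $j_\alpha$-th column equals $\hat{e}_{a_{1, j_\alpha}} - \hat{e}_{a_{1, j_{\alpha+1}}}$, and the sum telescopes cyclically to zero. Anti-circuits are handled identically with $b = (1, 1)$.

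The forward direction is where the real work lies. Suppose $A$ is invalid with certificate $(b, t)$ and let $J = \{ j : t_j > 0 \}$. If $b_1 = 0$ or $b_2 = 0$, inspecting each coordinate equation forces some row to contain both $+\alpha$ and $-\alpha$, which is condition (1). Otherwise I rescale to $b = (1, c)$ with $c \neq 0$ and assume also that (1) fails, so that for each $\alpha$ the signs $\sigma_i(\alpha)$ of $\pm \alpha$ appearing in row $i$ over columns of $J$ are well-defined. Letting $P_\alpha$ (resp. $Q_\alpha$) denote the sum of weights of columns in $J$ with $|a_{1,j}| = \alpha$ (resp. $|a_{2,j}| = \alpha$), the $\alpha$-coordinate of the equation collapses to
\[
  \sigma_1(\alpha) P_\alpha + c \sigma_2(\alpha) Q_\alpha = 0 .
\]
Two consequences follow immediately: at every active vertex (one with $P_\alpha + Q_\alpha > 0$) both $P_\alpha, Q_\alpha > 0$, and the sign ratio $\sigma_1(\alpha)/\sigma_2(\alpha) = -c Q_\alpha / P_\alpha$ equals $-\mathrm{sign}(c)$ uniformly across all active $\alpha$.

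From here I would form the directed multigraph $D$ on the active vertices with one edge $|a_{1,j}| \to |a_{2,j}|$ for each $j \in J$. Positive in- and out-degree at every vertex guarantees a directed cycle on distinct columns $j_1, \ldots, j_p$, and the uniform sign ratio forces every transition on that cycle to be of the same type, so $D$ produces a circuit (when $c < 0$) or an anti-circuit (when $c > 0$), confirming condition (2). The hard part will be the sign-uniformity step: without it, a cycle extracted from $D$ could mix circuit-like and anti-circuit-like transitions and satisfy neither clause of condition (2). What ultimately drives the proof is the rigidity of the single scalar equation at each coordinate, which pins down all of these sign ratios simultaneously.
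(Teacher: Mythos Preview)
Your argument is correct and follows essentially the same route as the paper: both exhibit explicit certificates with $b = e_i$ or $b = (1,\pm 1)$ for the reverse direction, and for the forward direction both extract from the coordinate equations the key sign constraint (the paper's observation that each transition sign $s_\ell$ is opposite to $b_1 b_2$ is exactly your $\sigma_1(\alpha)\sigma_2(\alpha) = -\operatorname{sign}(c)$) and then chase columns until a cycle closes. The paper performs that chase directly, column by column from an arbitrary starting column of $M(A,b)$; your directed multigraph $D$ is a clean abstraction of the same walk, with the uniform sign ratio ensuring the extracted cycle is a pure circuit or pure anti-circuit.
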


\begin{proof}
	It is clear that $A$ is invalid whenever it satisfies (1). Suppose $A$ satisfies (2). If it contains an anti-circuit in columns $j_1,\ldots,j_p$, set $a = (1,1)$, and let $c \in \real^{m+1}$ contain $1/p$ in components $j_1,\ldots,j_p$ and zeros everywhere else. Then, the induced matrix $M = M(A,a)$ satisfies $Mc = 0$, so $A$ is invalid. If it contains a circuit, then the same is true with $a = (1,-1)$. 

	Conversely, suppose $A$ is invalid and doesn't satisfy (1). Choose a nonzero $b = (b_1,b_2) \in \real^2$ such that the origin is in the convex hull of the columns of the induced matrix $M = M(A,b)$. Note that neither $b_1$ nor $b_2$ can be zero, as otherwise (1) would hold. If $A$ doesn't contain a circuit or anti-circuit of length one, then no column of $M$ can be the zero vector. In that case, the first column of $M$ is nonzero in row $|a_{2,1}|$, and $M$ must contain an entry in row $|a_{2,1}|$ with sign opposite that of its $(1,|a_{2,2}|)$-th entry. Suppose such an entry lies in column $j_{p_1}$. The corresponding column of $A$ must have $\pm a_{2,1}$ as one of its two entries. Since (1) doesn't hold, $a_{2,j_{p_1}} \neq -a_{2,1}$. If $a_{2,j_{p_1}} = a_{2,1}$, then the corresponding entries of $M$ have the same sign, which contradicts the definition of $j_{p_1}$. Therefore, $a_{1,j_{p_1}} = s_1 a_{2,1}$, where $s_1 \in \{ -1, 1 \}$. By construction, $s_1$ must have sign opposite that of $b_1b_2$. Repeating this process a total of at most $\min\{m,n - 1\}$ times, alternating between the two rows of $A$, produces columns $1,j_{p_1},\ldots,j_{p_K}$ that contain a circuit when $b_1$ and $b_2$ have opposite signs and an anti-circuit when they have the same sign.
\end{proof}

\begin{remark}
	Any matrix that satisfies either (1) or (2) in Lemma \ref{combinatorial characterization} must necessarily be invalid, as is any matrix that contains an invalid submatrix with distinct columns. These observations can be used to improve algorithms to identify valid matrices when $k > 2$.
\end{remark}

\noindent It follows from Lemma \ref{combinatorial characterization} that the Yang index of $St(n,2)$ is at least $m$ whenever there exists a $\tau$-invariant chain of $2 \times (m+1)$ matrices with vanishing boundary and nonzero Yang index and whose matrices avoid (1) and (2). For $G(n,2)$, the existence of such a $\tilde{\tau}$-invariant chain suffices.

	Expanding on the $\pm$ notation from the previous subsection, in a chain of valid matrices, the symbol $\mp$ will be used to indicate that the signs of an entry vary but, in order for each matrix in the chain to remain valid, are determined by the signs of the entries with $\pm$ in them. This is necessary to avoid circuits and anti-circuits when one column is a ``twist'' of another column, i.e., contains entries of the same magnitudes but in a different order. For example,
\[
	\left[ \begin{array}{cc} 1 & \pm 2 \\ \pm 2 & \mp 1 \end{array} \right] = \left[ \begin{array}{cc} 1 & 2 \\ 2 & -1 \end{array} \right] + \left[ \begin{array}{cc} 1 & 2 \\ -2 & 1 \end{array} \right] + \left[ \begin{array}{cc} 1 & -2 \\ 2 & 1 \end{array} \right] + \left[ \begin{array}{cc} 1 & -2 \\ -2 & -1 \end{array} \right]\textrm{.}
\]
The matrices $\left[ \begin{array}{cc} 1 & 2 \\ 2 & 1 \end{array} \right]$, $\left[ \begin{array}{cc} 1 & -2 \\ -2 & 1 \end{array} \right]$, $\left[ \begin{array}{cc} 1 & 2 \\ -2 & -1 \end{array} \right]$, and $\left[ \begin{array}{cc} 1 & -2 \\ 2 & -1 \end{array} \right]$ are excluded from the chain, since the first two contain circuits and the last two contain anti-circuits. The $\mp$ notation also helps to avoid placing a number and its negative in the same row. For example,
\[
	\left[ \begin{array}{cc} 1 &  2 \\ \pm 3 & \mp 3 \end{array} \right] = \left[ \begin{array}{cc} 1 & 2 \\ 3 & 3 \end{array} \right] + \left[ \begin{array}{cc} 1 & 2 \\ -3 & -3 \end{array} \right]\textrm{.}
\]
It is still the case that a matrix containing $K$ entries with $\pm$ in them represents the sum of $2^K$ individual matrices.

\begin{example}\label{G(3,2)}
	Since $G(3,2) \cong S^2$, $\nu(G(3,2)) = 2$. On $S^2$, the chain $c$ consisting of the eight $2$-simplices $[\pm e_1 \, \pm e_2 \pm e_3]$, which represents the fundamental class, is antipodal and satisfies $\partial(c) = 0$ and $\nu(c) = 1$. However, none of those simplices corresponds to a valid matrix. The issue is holonomy: Tracing the third vector in a frame around a loop will always return the first two to their original subspace, but typically not to the original vectors.

For example, following the edges of $[e_3 \, -e_2 \, e_1]$ in order, its vertices may be identified with$\left[ \begin{array}{c} 1 \\ 2 \end{array} \right]$, $\left[ \begin{array}{c} 1 \\ 3 \end{array} \right]$, and $\left[ \begin{array}{c} 2 \\ 3 \end{array} \right]$, respectively, but returning along the final edge ends at $\left[ \begin{array}{c} 2 \\ -1 \end{array} \right]$. While $\left[ \begin{array}{c} 1 \\ 2 \end{array} \right]$ and $\left[ \begin{array}{c} 2 \\ -1 \end{array} \right]$ determine the same oriented plane in $G(3,2)$, they're different elements of $St(3,2)$. This can be rectified by stretching out an edge along which the simplex is constant in the Grassmannian and splitting in half, as in the following diagram:

\vspace{3pt}

\begin{tikzpicture}
	\begin{scope}[local bounding box=triangle_box]
		\node (ta) at (0,0) [circle, fill=black, inner sep=1.5pt, label=left:{$\left[ \begin{array}{c} 2 \\ 3 \end{array} \right]$}] {};
		\node (tb) at (3,0) [circle, fill=black, inner sep=1.5pt, label=right:{$\left[ \begin{array}{c} 1 \\ 3 \end{array} \right]$}] {};
		\node (tc) at (1.5,2.5) [circle, fill=black, inner sep=1.5pt, label=above:{$\left[ \begin{array}{c} 2 \\ -1 \end{array} \right] = \left[ \begin{array}{c} 1 \\ 2 \end{array} \right]$}] {};

		\draw (ta) -- (tb) -- (tc) -- (ta);
	\end{scope}

	\node (cong) at ([xshift=1cm] triangle_box.east) {$\cong$};

	\node (ra) at ([xshift=4.3cm] tb.east) [circle, fill=black, inner sep=1.5pt, label=left:{$\left[ \begin{array}{c} 2 \\ 3 \end{array} \right]$}] {};
	\node (rb) [above=2.46cm of ra, circle, fill=black, inner sep=1.5pt, label=left:{$\left[ \begin{array}{c} 2 \\ -1 \end{array} \right]$}] {};
	\node (rc) [right=2.46cm of rb, circle, fill=black, inner sep=1.5pt, label=right:{$\left[ \begin{array}{c} 1 \\ 2 \end{array} \right]$}] {};
	\node (rd) [below=2.46cm of rc, circle, fill=black, inner sep=1.5pt, label=right:{$\left[ \begin{array}{c} 1 \\ 3 \end{array} \right]$}] {};

	\draw (ra) -- (rb) -- (rc) -- (rd) -- (ra);
	\draw (ra) -- (rc);
\end{tikzpicture}

\vspace{3pt}

\noindent In other words, $[e_3 \, -e_2 \, e_1]$ corresponds to the sum $\left[ \begin{array}{ccc} 1 & 1 & 2 \\ 2 & 3 & 3 \end{array} \right] +\left[ \begin{array}{ccc} 1 & 2 & 2 \\ 2 & -1 & 3 \end{array} \right]$. Repeating this over all eight simplices, one finds that the fundamental class of $G(3,2)$ is represented by the chain
\[
	\left[ \begin{array}{ccc} 1 & 1 & \pm 2 \\ \pm 2 & \pm 3 & \mp 3 \end{array} \right] +\left[ \begin{array}{ccc} 1 & \pm 2 & \mp 2 \\ \pm 2 & \mp 1 & \pm 3 \end{array} \right]
\]
of sixteen valid matrices.
\end{example}

\begin{proof}[Proof of Theorem \ref{k = 2 case}(a)]
	Let $n \geq 3$ be odd. None of the $2 \times n$ matrices in the $\tilde{\tau}$-invariant chain
	\begin{align*}
		c =&\sum_{i = 3}^{n-1} \left[ \begin{array}{cccccccc} 1 & \cdots & 1 & 1 & \pm 2 & \cdots & \mp 2 \\ \pm 2 & \cdots & \pm i & \pm n & \mp i & \cdots & \pm (n - 1) \end{array} \right]\\
		&+ \left[ \begin{array}{cccccc} 1 & 1 & \pm 2 & \mp 2 & \cdots & \mp 2 \\ \pm 2 & \pm n & \mp 1 & \pm 3 & \cdots & \pm (n-1) \end{array} \right]\\
		&+ \left[ \begin{array}{cccccc} 1 & 1 & \pm 2 & \mp 2 & \cdots & \mp 2 \\ \mp (n-1) & \pm n & \pm 1 & \pm 3 & \cdots & \pm (n-1) \end{array} \right]\\
		&+ \left[ \begin{array}{ccccccc} 1 & 1 & \pm 2 & \mp 2 & \cdots & \mp 2 & \pm 2 \\ \pm (n-1) & \mp n & \pm 1 & \pm 3 & \cdots & \pm (n-2) & \pm n\end{array} \right]\\
		&+ \left[ \begin{array}{ccccc} 1 & \pm 2 & \mp 2 & \cdots & \mp 2 \\ \mp (n-1) & \pm 1 & \pm 3 & \cdots & \pm n\end{array} \right]\\
	\end{align*}
contains a circuit, an anti-circuit, or a row containing a number and its negative, so they are all valid. It is nearly routine to verify that $\partial c = 0$, although one must be careful with the last summand; the chains
\[
	\left[ \begin{array}{ccccccc} \pm 2 & \mp 2 & \cdots & \mp 2 & \mp 2 & \mp 2 \\ \pm 1 & \pm 3 & \cdots & \pm (n-2) & n-1 & \pm n \end{array} \right]
\]
and
\[
	\left[ \begin{array}{ccccccc} \pm 2 & \mp 2 & \cdots & \mp 2 & \mp 2 & \mp 2 \\ \pm 1 & \pm 3 & \cdots & \pm (n-2) & -(n-1) & \pm n \end{array} \right]
\]
must cancel over $\integer_2$, which is true in $\tilde{\mathscr{T}}(n,2)$ but not in $\mathscr{T}(n,2)$.

That $\nu(c) = 1$ follows from an inductive argument directly according to Yang's definition. Write $c_n = c$. Splitting $c_n = d_n + T(d_n)$, where
\begin{align*}
		d_n =&\sum_{i = 3}^{n-1} \left[ \begin{array}{cccccccc} 1 & \cdots & 1 & 1 & 1 & \pm 2 & \cdots & \mp 2 \\ \pm 2 & \cdots & \pm (i-1) & \mp i & n & \pm i & \cdots & \pm (n - 1) \end{array} \right]\\
		&+ \left[ \begin{array}{cccccc} 1 & 1 & \pm 2 & \mp 2 & \cdots & \mp 2 \\ \pm 2 & n & \mp 1 & \pm 3 & \cdots & \pm (n-1) \end{array} \right]\\
		&+ \left[ \begin{array}{cccccc} 1 & 1 & \pm 2 & \mp 2 & \cdots & \mp 2 \\ \mp (n-1) & n & \pm 1 & \pm 3 & \cdots & \pm (n-1) \end{array} \right]\\
		&+ \left[ \begin{array}{ccccccc} 1 & 1 & \pm 2 & \mp 2 & \cdots & \mp 2 & \pm 2 \\ \pm (n-1) & n & \pm 1 & \pm 3 & \cdots & \pm (n-2) & n\end{array} \right]\\
		&+ \left[ \begin{array}{cccccc} 1 & \pm 2 & \mp 2 & \cdots & \mp 2 & \mp 2 \\ \mp (n-1) & \pm 1 & \pm 3 & \cdots & \pm(n-1) & n\end{array} \right]\textrm{,}
\end{align*}
one computes that
\begin{align*}
	\partial(d_n) =&\sum_{i = 3}^{n-1} \left[ \begin{array}{cccccccc} 1 & \cdots & 1 & 1 & \pm 2 & \cdots & \mp 2 \\ \pm 2 & \cdots & \pm (i-1) & \mp i & \pm i & \cdots & \pm (n - 1) \end{array} \right]\\
			&+ \left[ \begin{array}{ccccc} 1 & \pm 2 & \mp 2 & \cdots & \mp 2 \\ \pm 2 & \mp 1 & \pm 3 & \cdots & \pm (n - 1) \end{array} \right]\\
			&+ \left[ \begin{array}{ccccc} \pm 2 & \mp 2 & \cdots & \mp 2 & \mp 2 \\ \pm 1 & \pm 3 & \cdots & \pm (n - 1) & n \end{array} \right]\textrm{.}
\end{align*}
Thus, $\nu(c) = \nu(c_{n-1}) + \nu \Big( \left[ \begin{array}{ccccc} \pm 2 & \mp 2 & \cdots & \mp 2 & \mp 2 \\ \pm 1 & \pm 3 & \cdots & \pm (n - 1) & n \end{array} \right] \Big)$, where $c_{n-1}$ is the chain in the first two lines on the right-hand side of the preceding equality. Continuing to split the $c_i$ around the terms of largest magnitude, one eventually finds that
\[
	\nu(c) = \nu \Big( \left[ \begin{array}{cc} 1 & \pm 2 \\ \pm 2 & \mp 1 \end{array} \right] \Big) + \sum_{i=3}^n \nu \Big( \left[ \begin{array}{ccccc} \pm 2 & \mp 2 & \cdots & \mp 2 & \mp 2 \\ \pm 1 & \pm 3 & \cdots & \pm (i-1) & i \end{array} \right] \Big)\textrm{.}
\]
Since
\[
	\nu \Big( \left[ \begin{array}{cc} 1 & \pm 2 \\ \pm 2 & \mp 1 \end{array} \right] \Big) = 0
\]
and
\[
	\nu \Big( \left[ \begin{array}{ccccc} \pm 2 & \mp 2 & \cdots & \mp 2 & \mp 2 \\ \pm 1 & \pm 3 & \cdots & \pm (i-1) & i \end{array} \right] \Big) = \nu \Big( \left[ \begin{array}{cccc} 2 & 2 & \cdots & 2 \\ \pm 1 & \pm 3 & \cdots & \pm i \end{array} \right] \Big) = 1\textrm{,}
\]
it follows that $\nu(c) = \sum_{i=3}^n 1 = 1$, as $n$ is odd.
\end{proof}

\section{Relation to Conner--Floyd index and coindex}

The exact values of the Yang indices of $St(n,k)$ and $G(n,k)$ remains an open question in all but a handful of cases. This is closely related to the computation of their Conner--Floyd indices and coindices \cite{ConnerFloyd1960}. The \textbf{Conner--Floyd index} $\mathrm{ind}(X,T)$ and \textbf{coindex} $\mathrm{coind}(X,T)$ of a $T$-space $(X,T)$ are defined by
\[
	\mathrm{ind}(X,T) = \max \{ n \geq 0 \st \textrm{there exists a } \integer_2 \textrm{-equivariant map } S^n \to X \}
\]
and
\[
	\mathrm{coind}(X,T) = \min \{ n \geq 0 \st \textrm{there exists a } \integer_2 \textrm{-equivariant map } X \to S^n \}\textrm{,}
\]
where the maximum and minimum are taken within $\nat \cup \{ \infty \}$. By Lemma \ref{index comparison}(b) and Theorem \ref{yang theorem},
\[
	\mathrm{ind}(X,T) \leq \nu(X,T) \leq \mathrm{coind}(X,T)\textrm{.}
\]
It is shown in \cite{ConnerFloyd1960} that $\mathrm{coind}(X,T) \leq \mathrm{dim}(X/T)$, where $\dim$ is covering dimension. If $(X,T)$ maps equivariantly into $(Y,S)$, then $\mathrm{ind}(X,T) \leq \mathrm{ind}(Y,S)$ and $\mathrm{coind}(X,T) \leq \mathrm{coind}(Y,S)$. Yang \cite{Yang1955} gave an example of a space with $\nu(X,T) < \mathrm{coind}(X,T)$.

For $St(n,1) = S^{n-1}$, the Conner--Floyd and Yang indices and coindex are all $n - 1$. It was proved in \cite{DaiLam1984} that $\mathrm{ind}(St(n,2)) = n - 2$ and in \cite{ConnerFloyd1962} (with a simpler argument given in \cite{DaiLam1984}) that
\[
	\mathrm{coind}(St(n,2)) = \left\{ \begin{array}{ccc} n - 1 & \textrm{if} & n \neq 2, 4, \textrm{ or } 8\\ n - 2 & \textrm{if} & n = 2, 4, \textrm{ or } 8 \end{array} \right. \textrm{.}
\]
It follows that the indices and coindex of $St(n,2)$ are all $n - 2$ for $n = 2,4, \textrm{ and } 8$ and either $n-2$ or $n-1$ otherwise. For larger values of $k$, not much appears to be known in the literature. Some basic properties are developed here.

\begin{proposition}\label{stiefel monotonicity}
	Let $1 \leq k \leq n$. Then, the following hold:\\
	\textbf{(a)} For each $n$, the Conner--Floyd and Yang indices and coindex of $St(n,k)$ are nonincreasing in $k$.\\
	\textbf{(b)} For each $k$, the Conner--Floyd and Yang indices and coindex of $St(n,k)$ are nondecreasing in $n$.\\
	\textbf{(c)} For each $n$ and $k$, the Conner--Floyd and Yang indices and coindex of $St(n+\ell,k+\ell)$ are nondecreasing in $\ell$.
\end{proposition}

\begin{proof} \textbf{(a)} The map from $St(n,k+\ell)$ into $St(n,k)$ that projects onto the last $k$ components is equivariant.\\
\noindent \textbf{(b)} If $\real^n$ is identified with any $n$-dimensional subspace of $\real^{n+m}$, then the inclusion map takes $St(n,k)$ into $St(n+m,k)$ equivariantly.\\
\noindent \textbf{(c)} For each $V \in St(n + \ell,\ell)$, once an orthonormal basis for $V^\perp \cong \real^n$ is fixed, $W \mapsto (V,W)$ defines an equivariant map from $St(n,k)$ into $St(n + \ell,k + \ell)$.
\end{proof}

\noindent It follows from (a) that
\[
	n - k \leq \mathrm{ind}(St(n,k)) \leq \nu(St(n,k)) \leq \mathrm{coind}(St(n,k)) \leq \mathrm{coind}(St(n,2))
\]
for all $n \geq 2$ and from (b) and (c) that the indices and coindex of $St(n+m,k+\ell)$ are all, respectively, at least as large as those of $St(n,k)$ whenever $0 \leq \ell \leq m$.

Since $G(n,k)$ is an equivariant quotient of $St(n,k)$, the indices and coindex of $G(n,k)$ are all at least as large as those of $St(n,k)$. The dimension bound gives that they are all at most $k(n-k)$.

\begin{proposition}\label{grassmannian monotonicity}
	Let $0 \leq k \leq n$. Then, the following hold:\\
	\textbf{(a)} For each $n$ and $k$, the Conner--Floyd and Yang indices and coindex of $G(n,k)$ agree, respectively, with those of $G(n,n-k)$.\\
	\textbf{(b)} For each $k$, the Conner--Floyd and Yang indices and coindex of $G(n,k)$ are nondecreasing in $n$.\\
	\textbf{(c)} For each $n$ and $k$, the Conner--Floyd and Yang indices and coindex of $G(n+\ell,k+\ell)$ are nondecreasing in $\ell$.
\end{proposition}

\begin{proof}
\noindent \textbf{(a)} Taking orthogonal complements (respecting orientation) defines an equivariant homemorphism between $G(n,k)$ and $G(n,n-k)$.\\
\noindent \textbf{(b)} As in the proof of Proposition \ref{stiefel monotonicity}(b), $\real^n$ may be identified with any $n$-dimensional subspace of $\real^{n+m}$.\\
\noindent \textbf{(c)} For each $P \in G(n+\ell,\ell)$, $P^\perp \cong \real^n$, and, with respect to that identification, $Q \mapsto P \oplus Q$ defines an equivariant map from $G(n,k)$ into $G(n + \ell, k + \ell)$. (That is, the map in the proof of Proposition \ref{stiefel monotonicity}(c) descends to Grassmannians.)
\end{proof}

\noindent As with those of Stiefel manifolds, the indices and coindex of $G(n+m,k+\ell)$ are all, respectively, no smaller than those of $G(n,k)$ when $0 \leq \ell \leq m$. Since $G(n,0) \cong S^0$, the indices and coindex of $G(n,0)$ and $G(n,n)$ are all zero; similarly, since $G(n,1) \cong S^{n-1}$, those of $G(n,1)$ and $G(n,n-1)$ are all $n-1$.

\begin{proposition}\label{G(4,2)}
	The Conner--Floyd and Yang indices and coindex of $G(4,2)$ all equal two.
\end{proposition}

\begin{proof}
	It is widely known that $G(4,2)$ is homeomorphic to $S^2 \times S^2$; an elementary proof of this is given in \cite{Baralic2011}. Following the computations there, one finds that this homeomorphism is equivariant, where the action on $S^2 \times S^2$ is the product of the action on each factor. Since the product of the identity map on $S^2$ is equivariant, $\mathrm{ind}(G(4,2)) \geq 2$. Since projection onto either factor is equivariant, $\mathrm{coind}(G(4,2)) \leq 2$. 
\end{proof}

\noindent It's reasonable to wonder whether $\nu(G(n,k))$ can differ from $\nu(St(n,k))$. In fact, this happens when $(n,k) = (4,3)$.

\begin{proposition}\label{St(4,3)}
Each of the following holds:\\
\noindent \textbf{(a)} $\nu(St(4,3)) = 1 \textrm{ or } 2$\\
\noindent \textbf{(b)} $\nu(G(4,3)) = 3$
\end{proposition}

\begin{proof}
\noindent \textbf{(a)} Since $\nu(St(2,1)) = 1$, Proposition \ref{stiefel monotonicity}(c) implies that $\nu(St(4,3)) \geq 1$. By Proposition \ref{G(4,2)}, $\nu(G(4,2)) = 2$, so $\nu(St(4,2)) \leq 2$. The claim now follows from Proposition \ref{stiefel monotonicity}(a).\\
\noindent \textbf{(b)} As already noted, this may be seen by combining $\nu(G(4,1)) = 3$ and Proposition \ref{grassmannian monotonicity}(a).
\end{proof}

\noindent By employing similar reasoning, one may show that, for $n \leq 9$, the values of $\nu(G(n,k))$ and $\nu(St(n,k))$ must be among the entries of the following tables. The notation $\ell$:$m$ means that all integers from $\ell$ to $m$ are included.

\begin{center}
	\begin{table}\label{Grassmannian table}
	\caption{Possible values of $\nu(G(n,k))$}
	\begin{tabular}{c|c|cccccccccc}
		\hline
		\multicolumn{2}{c|}{\multirow{2}{*}{}} & \multicolumn{10}{c}{$k$} \\
		\cline{3-12}
		\multicolumn{2}{c|}{} & 0 & 1 & 2 & 3 & 4 & 5 & 6 & 7 & 8 & 9 \\
		\hline
		\multirow{10}{*}{$n$} & 0 & 0 & & & & & & & & & \\
		& 1 & 0 & 0 & & & & & & & & \\
		& 2 & 0 & 1 & 0 & & & & & & & \\
		& 3 & 0 & 2 & 2 & 0 & & & & & & \\
		& 4 & 0 & 3 & 2 & 3 & 0 & & & & & \\
		& 5 & 0 & 4 & 4:6 & 4:6 & 4 & 0 & & & & \\
		& 6 & 0 & 5 & 4:8 & 4:9 & 4:8 & 5 & 0 & & & \\
		& 7 & 0 & 6 & 6:10 & 4:12 & 4:12 & 6:10 & 6 & 0 & & \\
		& 8 & 0 & 7 & 6:12 & 6:15 & 4:16 & 6:15 & 6:12 & 7 & 0 & \\
		& 9 & 0 & 8 & 8:14 & 6:18 & 6:20 & 6:20 & 6:18 & 8:14 & 8 & 0 \\
		\hline
	\end{tabular}
	\end{table}
\end{center}

\vspace{2pt}

\begin{center}
	\begin{table}\label{Stiefel table}
	\caption{Possible values of $\nu(St(n,k))$}
	\begin{tabular}{c|c|ccccccccc}
		\hline
		\multicolumn{2}{c|}{\multirow{2}{*}{}} & \multicolumn{9}{c}{$k$} \\
		\cline{3-11}
		\multicolumn{2}{c|}{} & 1 & 2 & 3 & 4 & 5 & 6 & 7 & 8 & 9 \\
		\hline
		\multirow{9}{*}{$n$} & 1 & 0 & & & & & & & & \\
		& 2 & 1 & 0 & & & & & & & \\
		& 3 & 2 & 1:2 & 0 & & & & & & \\
		& 4 & 3 & 2 & 1:2 & 0 & & & & & \\
		& 5 & 4 & 3:4 & 2:4 & 1:4 & 0 & & & & \\
		& 6 & 5 & 4:5 & 3:5 & 2:5 & 1:5 & 0 & & & \\
		& 7 & 6 & 5:6 & 4:6 & 3:6 & 2:6 & 1:6 & 0 & & \\
		& 8 & 7 & 6 & 5:6 & 4:6 & 3:6 & 2:6 & 1:6 & 0 & \\
		& 9 & 8 & 7:8 & 6:8 & 5:8 & 4:8 & 3:8 & 2:8 & 1:8 & 0 \\
		\hline
	\end{tabular}
	\end{table}
\end{center}

\bibliographystyle{amsplain}
\bibliography{bibliography}

\end{document}